\def\R{\mathbb{R}}
\def\C{\mathbb{C}}
\def\bF{\mathbf{f}}
\def\bp{\mathbf{p}}
\def\one{{\mathbf 1}}
\newtheorem{theorem}{Theorem}[section]
\newtheorem{proposition}[theorem]{Proposition}
\begin{document}
\title{Decay Rate of n-Linear Oscillatory Integral Operators in $\R^2$}
\author{Aleksandra Niepla}
\address{Aleksandra Niepla, Department of Mathematics. 310 Malott Hall, Cornell University, Ithaca, New York 14853}
\email{an489@cornell.edu}

\author{Kevin O'Neill}
\address{Kevin O'Neill, Department of Mathematics,
        University of California, 
        Berkeley, CA 94720-3840, USA}
\email{oneill@math.berkeley.edu}

\author{Zhen Zeng}
\address{Zhen Zeng, Department of Mathematics. David Rittenhouse Lab,
209 South 33rd Street,
Philadelphia, PA 19104-6395}
\email{zhenzeng@math.upenn.edu}
\thanks{This material is based upon work supported by the National Science Foundation under Grant Number DMS 1641020.} 

\begin{abstract}
In this paper, we prove $L^p$ decay estimates for multilinear oscillatory integrals in $\R^2$, establishing sharpness through a scaling argument. The result in this paper is a generalization of the previous work by Gressman and Xiao \cite{A.3}.
\end{abstract}
\maketitle
\section{Introduction}

In the groundbreaking work \cite{A.1} of Christ, Li, Tao and Thiele, they initialize the systematic study of general multilinear oscillatory integrals, those of the form

\begin{equation}
    T(f_1,...,f_n)=\int_{\R^m}e^{i\lambda S(x)}\prod_{j=1}^nf_j(\pi_j(x))\phi(x) \ dx,
\end{equation}
where $\lambda\in\R$ is a parameter, $S:\R^m\rightarrow\R$ is a real-valued measurable function, $\phi\in C_0(\R^m)$ is a cutoff function containing the origin in its support, and the $\pi_j$ are orthogonal projections onto proper subspaces $V_j$ of $\R^m$ of common dimension $\kappa<m$. The functions $f_j:V_j\rightarrow \C$ are locally integrable with respect to the Lebesgue measure on $V_{j}$.

A phase $S$ is said to be \textit{degenerate} if there exist functions $S_j:V_j\rightarrow\R$ such that $S(x)=\sum_{j=1}^nS_j\circ\pi_j(x,y)$, and \textit{non-degenerate} otherwise. One may easily see that if $S$ is degenerate, then one does not expect decay in $||T||$ as $\lambda\rightarrow\infty$ since $e^{i\lambda S(x)}$ may be cancelled by appropriate modulations of the functions $f_j$.

Another related concept is called simple nondegeneracy. A function $S$ is said to be {\it simply non-degenerate} if there exists a differential operator $L$ of the form $\prod_{j=1}^n(w_j\circ\nabla)$ with $w_j\in V_j^\perp$ such that $L(S)\nequiv0$. If $S$ is simply nondegenerate, then it is nondegenerate, though the converse generally fails. However, the two properties are equivalent when the subspaces $V_j$ are of codimension 1, as will be the case for the remainder of this paper. See \cite{A.1} for reference.


Recently, Gressman and Xiao \cite{A.3} proved the following theorem on the decay rate of a multilinear oscillatory integral operator.

\begin{theorem}\label{thm:GX}
Let $S(x,y)$ be a real-analytic function defined in a neighborhood of $0\in\R^2$. Assume that
\begin{equation}
    |\partial_x\partial_y(\partial_x-\partial_y)S(x,y)|\geq1
\end{equation}
for all $(x,y)$ in the convex hull of the support of $\phi$. Then, there exists $C>0$ such that
\begin{equation}
    \left|\iint e^{i\lambda S(x,y)}f(x)g(y)h(x+y)\phi(x,y)dxdy\right|\leq|\lambda|^{-1/4} C||f||_p||g||_q||h||_r
\end{equation}
whenever $p,q,r\in[2,4)$ and $p^{-1}+q^{-1}+r^{-1}=5/4$.
\end{theorem}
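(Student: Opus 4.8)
The plan is to reduce, via the affine symmetry of the problem and multilinear interpolation, to a single endpoint estimate at $(p,q,r)=(2,2,4)$, and then to obtain that endpoint by interpolating a Phong--Stein--type bound against a $TT^{*}$--type bound, each of which ``sees'' the hypothesis through a different slot. Throughout I abbreviate $T(f,g,h)$ for the trilinear form and write $\lesssim$ for $\le C$ with $C$ depending only on $S$ and $\phi$.

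\emph{Reduction.} The three functions depend on the linear functionals $x$, $y$ and $-(x+y)$, which sum to zero and are therefore permuted by a linear action of $S_{3}$ on $\R^{2}$; under this action the operator $L:=\partial_{x}\partial_{y}(\partial_{x}-\partial_{y})$ — the product of the three directional derivatives annihilating $f$, $g$, $h$ respectively — is carried to the corresponding operator for the permuted configuration, so $|LS|\ge 1$ is preserved up to a constant and the trilinear form is symmetric under permuting the pairs $(f,p),(g,q),(h,r)$. Since $\{(\tfrac1p,\tfrac1q,\tfrac1r):p,q,r\in[2,4),\ \tfrac1p+\tfrac1q+\tfrac1r=\tfrac54\}$ is the convex hull of the three permutations of $(\tfrac12,\tfrac12,\tfrac14)$ with the vertices deleted, multilinear complex interpolation reduces the theorem to the three vertex estimates, and by the symmetry it suffices to prove $|T(f,g,h)|\lesssim|\lambda|^{-1/4}\|f\|_{2}\|g\|_{2}\|h\|_{4}$. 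Writing $T(f,g,h)=\int h(z)\,B(f,g)(z)\,dz$ with $B(f,g)(z)=\int e^{i\lambda S(x,z-x)}f(x)g(z-x)\phi(x,z-x)\,dx$, Hölder's inequality together with the log-convexity $\|B(f,g)\|_{4/3}\le\|B(f,g)\|_{1}^{1/2}\|B(f,g)\|_{2}^{1/2}$ reduces this, since $(|\lambda|^{-1/3})^{1/2}(|\lambda|^{-1/6})^{1/2}=|\lambda|^{-1/4}$, to
\[
\|B(f,g)\|_{1}\lesssim|\lambda|^{-1/3}\|f\|_{2}\|g\|_{2},\qquad \|B(f,g)\|_{2}\lesssim|\lambda|^{-1/6}\|f\|_{2}\|g\|_{2},
\]
equivalently to $|T(f,g,h)|\lesssim|\lambda|^{-1/3}\|f\|_{2}\|g\|_{2}\|h\|_{\infty}$ and $|T(f,g,h)|\lesssim|\lambda|^{-1/6}\|f\|_{2}\|g\|_{2}\|h\|_{2}$.

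\emph{The $L^{2}\times L^{2}\times L^{2}$ estimate.} Expanding $\|B(f,g)\|_{2}^{2}$ and changing variables gives $\|B(f,g)\|_{2}^{2}=\int\big(\iint e^{i\lambda\Psi_{t}(x,s)}\Phi_{t}(x,s)F_{t}(x)G_{t}(s)\,dx\,ds\big)\,dt$, where $\Psi_{t}(x,s)=S(x,s)-S(x+t,s-t)$, $F_{t}(x)=f(x)\overline{f(x+t)}$, $G_{t}(s)=g(s)\overline{g(s-t)}$ and $\Phi_{t}$ is a smooth, compactly supported product of cutoffs, bounded uniformly in $t$. The key identity is
\[
\partial_{x}\partial_{s}\Psi_{t}(x,s)=S_{xy}(x,s)-S_{xy}(x+t,s-t)=-t\int_{0}^{1}\big[(\partial_{x}-\partial_{y})S_{xy}\big](x+\tau t,s-\tau t)\,d\tau,
\]
and since $LS=(\partial_{x}-\partial_{y})S_{xy}$ is continuous and nowhere vanishing, hence of constant sign with $|LS|\ge1$, this has size comparable to $|t|$ on the support. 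Hörmander's $L^{2}$ bilinear bound therefore gives $\lesssim(|\lambda t|)^{-1/2}\|F_{t}\|_{2}\|G_{t}\|_{2}$ for the inner form, while trivially it is $\lesssim\|F_{t}\|_{1}\|G_{t}\|_{1}\le\|f\|_{2}^{2}\|g\|_{2}^{2}$. Splitting the $t$-integral dyadically, using $\int_{|t|\sim2^{-k}}\|F_{t}\|_{2}^{2}\,dt\le\|f\|_{2}^{4}$ and Cauchy--Schwarz, the piece $|t|\sim2^{-k}$ is $\lesssim\|f\|_{2}^{2}\|g\|_{2}^{2}\min(2^{-k},|\lambda|^{-1/2}2^{k/2})$; summing the geometric series (balanced at $2^{k}\sim|\lambda|^{1/3}$) yields $\|B(f,g)\|_{2}^{2}\lesssim|\lambda|^{-1/3}\|f\|_{2}^{2}\|g\|_{2}^{2}$.

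\emph{The $L^{2}\times L^{2}\times L^{\infty}$ estimate, and the main obstacle.} Here the hypothesis is used in the ``$(f,g)$ slot'': regarding $h(x+y)\phi(x,y)$ as the amplitude of the oscillatory operator pairing $f$ (in $x$) with $g$ (in $y$), the relevant mixed derivative is $\partial_{x}\partial_{y}S=S_{xy}$, and $(\partial_{x}-\partial_{y})S_{xy}\ge1$ says exactly that $S_{xy}$ is a submersion — its zero set is a smooth curve along which it vanishes to first order, and $|\{|S_{xy}|\le\delta\}\cap K|\lesssim\delta$ on compact $K$. One then runs the standard Phong--Stein scheme: decompose according to $|S_{xy}|\sim2^{-j}$ for $2^{-j}\gtrsim|\lambda|^{-1/3}$, plus the remainder $\{|S_{xy}|\lesssim|\lambda|^{-1/3}\}$; on the remainder the trivial bound is $\lesssim|\lambda|^{-1/3}\|f\|_{2}\|g\|_{2}\|h\|_{\infty}$, while on each dyadic piece the $L^{2}$ operator van der Corput bound is $\lesssim(|\lambda|2^{-j})^{-1/2}\|f\|_{2}\|g\|_{2}\|h\|_{\infty}$, and the geometric series in $j$ is dominated by its smallest scale $2^{-j}\sim|\lambda|^{-1/3}$, again giving $|\lambda|^{-1/3}$. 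The delicate point — which I expect to be the real obstacle — is that the amplitude $h(x+y)\phi(x,y)$ is merely bounded, so the integration by parts underlying Hörmander's lemma cannot be applied verbatim; this is surmounted by exploiting that $h(x+y)$ is constant along the characteristics $\{x+y=\mathrm{const}\}$, which is precisely the direction in which $S_{xy}$ is strictly monotone, so on each dyadic region one may change variables so that $S_{xy}$ becomes comparable to a coordinate while $h$ remains a bounded factor, constant in the variable in which one integrates by parts, and only the smooth cutoff is differentiated; the resulting bound depends on $h$ only through $\|h\|_{\infty}$. (Real-analyticity of $S$ enters only to make all these sublevel sets and changes of variable behave uniformly; the quantitative input is the lower bound $|LS|\ge1$.)
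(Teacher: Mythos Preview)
This theorem is not proved in the paper: it is quoted from Gressman and Xiao \cite{A.3} as prior work and then used as a black box in the base case of Theorem~\ref{theorem: higher levels}. So there is no ``paper's own proof'' to compare against. What the paper does record is that both Li's and Gressman--Xiao's arguments proceed by a $TT^{*}$ reduction to H\"ormander's bilinear oscillatory integral theorem; this is exactly the template the paper itself runs in Section~\ref{sec:proof}---freeze the last function, Cauchy--Schwarz, $TT^{*}$ to produce a phase $S_{\tau}$ with $|D_{n-1}S_{\tau}|\gtrsim|\tau|$, apply the lower-level estimate, and close the $\tau$-integral with H\"older, Hardy--Littlewood--Sobolev, and Young. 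One therefore expects the Gressman--Xiao proof of Theorem~\ref{thm:GX} to follow the same scheme one level down, producing the full exponent range $p,q,r\in[2,4)$ directly rather than by interpolating two endpoint estimates.

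Your route is genuinely different, and the gap is precisely where you flag it. The $L^{2}\times L^{2}\times L^{2}$ bound with decay $|\lambda|^{-1/6}$ is fine and is Li's theorem. The $L^{2}\times L^{2}\times L^{\infty}$ bound with decay $|\lambda|^{-1/3}$, however, is not established by your sketch. In the $TT^{*}$ kernel for $f\mapsto\int e^{i\lambda S(x,y)}f(x)h(x+y)\phi(x,y)\,dx$ one integrates in $y$ with $x,x'$ fixed, and the factors $h(x+y),\ \overline{h(x'+y)}$ are genuine functions of $y$; integration by parts in $y$ hits them. Your observation that $h$ is constant along $\{x+y=\text{const}\}$ and that $S_{xy}$ is strictly monotone in the $(1,-1)$ direction does not sidestep this, because after $TT^{*}$ in the $f$-slot the oscillation variable is $y$, not $x-y$; if instead you rotate coordinates so that $h$ depends on a single new variable, then $f$ and $g$ cease to be one-variable functions and the reduction to a H\"ormander/Phong--Stein operator bound breaks in a different place. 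Absent a rough-amplitude Phong--Stein lemma tailored to amplitudes constant in a fixed direction, the $|\lambda|^{-1/3}$ endpoint---and hence the interpolation to $|\lambda|^{-1/4}$---is unjustified. The direct $TT^{*}$-then-H\"ormander scheme described in the paper avoids the $L^{\infty}$ endpoint entirely.
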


Theorem \ref{thm:GX} is an improvement of a result of Li \cite{A.2}, which establishes a decay rate of $|\lambda|^{-1/6}$ when $p=q=r=2$. The result of Li may be obtained by interpolating Theorem \ref{thm:GX} with the trivial bounds given by Young's convolution inequality. Both proofs use a $TT^*$ argument to reduce to a theorem of H\"ormander on bilinear oscillatory integrals \cite{Hor}. Also, both decay rates are sharp for the given set of exponents. 

Another generalization of Li's work in higher dimensions appears in concurrent work by the authors \cite{US}. The work of Christ and Oliveira e Silva \cite{CD} also addresses decay properties in higher dimensions.

In this paper, we generalize Theorem \ref{thm:GX} to higher levels of multilinearity. 
Denote $\bF=(f_1,...,f_n)$ and $\bp=(p_1,...,p_n)$. Let $a_j\in\R^2\setminus\{0\}$ for $1\leq j\leq n$ and define an $n$-linear operator $\lambda\geq1$ by

\begin{equation}
\Lambda_n(\bF):=\int_{\R^2}e^{i\lambda S(x,y)}\prod_{j=1}^n f_j(a_j\cdot (x,y))\phi(x,y)dxdy.
\end{equation}

We say $a_1,...,a_n$ lie in general position if no $a_i$ is a scalar multiple of $a_j$ for $i\neq j$. The definition of general position is quite natural since if there exist $i\neq j$ such that $a_i$ is a scalar multiple of $a_j$, then by combining $f_i(a_i\cdot(x,y))$ and $f_j(a_j\cdot (x,y))$, we can reduce the problem to lower multilinearity. Note that by a change of variables, Theorem \ref{thm:GX} holds for any three rank vectors in general position in place of $(0,1), (1,0)$, and $(1,1)$. (Furthermore, if one allows for a function $g(x,y)$ of two variables, no decay is guaranteed since one may take $g(x,y)=e^{-i\lambda S(x,y)}$. Thus, our work covers all nontrivial multilinear oscillatory integrals in $\R^2$.)

Below is our main result:




\begin{theorem}\label{theorem: higher levels}
Let $n\geq4$ and let $a_j=(b_j,c_j)\in \R^2\setminus\{0\}$ ($1\leq j\leq n$) lie in general position and define $D_n=\prod_{j=1}^n(c_j\partial_x-b_j\partial_y)$. Then, there exists $C>0$, depending only on $\phi$ and the $a_j$, such that if $|D_nS(x,y)|\geq1$ for all $(x,y)$ in the convex hull of the support of $\phi$,
\begin{equation}\label{eq: higher levels}
|\Lambda_n(\bF)|\leq C|\lambda|^{-2^{1-n}}\prod_{j=1}^n||f_j||_{p_j}
\end{equation}
for \begin{equation}\label{eq: higher exponents}
\bp=\left(2,2,\frac{2^{n-1}}{2^{n-2}-1},...,\frac{2^{n-1}}{7},\frac{2^{n-1}}{3}+\epsilon,2^{n-1}-\frac{9\cdot2^{n-3}\epsilon}{2^{n-3}+3\epsilon}\right)
\end{equation}
and $0<\epsilon<1$.
\end{theorem}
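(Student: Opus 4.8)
The plan is to induct on $n$, using the case $n=3$ (Theorem \ref{thm:GX}, after a linear change of variables putting any three vectors in general position into the positions $(1,0),(0,1),(1,1)$) as the base case. Given $n\geq 4$, I would isolate two of the factors — by reindexing, $f_{n-1}$ and $f_n$ — and apply a $TT^*$-type argument in these two variables. Concretely, I would freeze $f_1,\dots,f_{n-2}$ and view $\Lambda_n$ as a bilinear form in $(f_{n-1},f_n)$; expanding $|\Lambda_n(\bF)|^2$ and applying Cauchy–Schwarz in the pair of "surviving" linear coordinates produces a new oscillatory integral in which the phase is a difference $S(x,y)-S(x',y)$ (or the analogous expression adapted to the linear forms $a_{n-1},a_n$), the two modulating factors $f_{n-1},f_n$ have been eliminated at the cost of squaring the remaining $L^2$ norms, and the relevant derivative hypothesis degrades in a controlled way: differencing once in the direction dual to $a_n$ (or $a_{n-1}$) converts the $n$-fold derivative bound $|D_nS|\geq 1$ into an $(n-1)$-fold lower bound for the differenced phase, which is exactly the hypothesis needed to invoke the inductive statement at level $n-1$. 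This is the same mechanism by which Gressman–Xiao reduce the trilinear estimate to Hörmander's bilinear theorem; here it is iterated.

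The key steps, in order, are: (1) perform the linear change of variables so that the general-position hypothesis and the operator $D_n$ take a normalized form, tracking how $\phi$ and the constant $C$ transform; (2) set up the $TT^*$ reduction removing $f_{n-1}$ and $f_n$, being careful that the two linear forms $a_{n-1},a_n$ together with the ambient coordinates give a genuine two-parameter family so that Cauchy–Schwarz in those variables is legitimate; (3) verify that the differenced phase $\tilde S$ satisfies $|D_{n-1}\tilde S|\geq 1$ on the convex hull of the new cutoff's support — this uses the mean value theorem and the hypothesis $|D_nS|\geq 1$, since $D_n = (c_n\partial_x - b_n\partial_y)D_{n-1}$ and differencing in the conjugate direction replaces that last derivative by a difference quotient bounded below; (4) apply the inductive hypothesis at level $n-1$ to the differenced operator with its own exponent tuple, then (5) do the exponent bookkeeping: the $TT^*$ step squares $\|f_{n-1}\|$ and $\|f_n\|$-type contributions and halves the decay exponent, so $2^{1-n}$ at level $n$ comes from $2^{2-n}$ at level $n-1$, and the curious tuple in \eqref{eq: higher exponents} — with its two $2$'s, the geometric-type progression $\tfrac{2^{n-1}}{2^{n-2}-1},\dots,\tfrac{2^{n-1}}{7}$, and the two $\epsilon$-perturbed endpoint exponents — must be shown to be precisely the image of the level-$(n-1)$ tuple under the duality/Hölder relations forced by the $TT^*$ expansion. (6) Finally, one interpolates or applies Hölder to absorb the frozen factors $f_1,\dots,f_{n-2}$ at the claimed exponents, and checks the scaling/homogeneity count (the $\tfrac1{p_j}$ sum) is consistent with $|\lambda|^{-2^{1-n}}$, which also yields sharpness via the usual dilation example.

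The main obstacle I expect is step (5), the exponent bookkeeping: the $TT^*$/Cauchy–Schwarz step does not act symmetrically on all $n$ functions, so one must choose \emph{which} pair to eliminate at each stage and verify that the resulting recursion on exponent tuples closes up to exactly \eqref{eq: higher exponents}, including getting the $\epsilon$-dependence in the last coordinate, $2^{n-1}-\tfrac{9\cdot 2^{n-3}\epsilon}{2^{n-3}+3\epsilon}$, to propagate correctly under the doubling. A secondary difficulty is making sure the general-position hypothesis is preserved at each level — after the change of variables and the differencing, the $n-1$ relevant linear forms for the reduced problem must still be pairwise non-proportional and must still be compatible with the normalization used by Theorem \ref{thm:GX}; degenerate configurations among the $a_j$ would break the induction, so some care is needed to argue one can always reorder the $a_j$ to avoid this. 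I also anticipate needing a short argument (perhaps a separate lemma) that the convex-hull derivative condition is stable under the operations performed, since the inductive hypothesis requires $|D_{n-1}\tilde S|\geq 1$ on the convex hull of a support that is itself a difference set.
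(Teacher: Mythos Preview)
Your overall architecture is right --- induction on $n$ with a $TT^*$ reduction to level $n-1$, the differenced phase, and the mean-value/FTC argument to transfer the derivative hypothesis --- and this is exactly what the paper does. But there are two concrete misidentifications in how the $TT^*$ step actually works that would prevent the proof from closing.

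First, the $TT^*$ step removes \emph{one} function, not two. After normalizing $a_n=(0,1)$, Cauchy--Schwarz in $y$ pulls out $\|f_n\|_2$, and expanding $\|T(f_1,\dots,f_{n-1})\|_2^2$ gives an integral over a shift parameter $\tau$ of an $(n-1)$-linear oscillatory integral whose inputs are the product functions $F_{j,\tau}(z)=f_j(z)\overline{f_j}(z+b_j\tau)$, $1\leq j\leq n-1$. The functions $f_1,\dots,f_{n-1}$ are \emph{not} frozen; they are doubled up inside the $F_{j,\tau}$. Your step (6) as written (``absorb the frozen factors by H\"older'') does not apply.

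Second, the differenced phase satisfies $|D_{n-1}S_\tau|\geq |\tau|$, not $\geq 1$; the FTC/MVT argument produces a lower bound proportional to the shift. Applying the inductive hypothesis therefore yields a factor $|\lambda\tau|^{-2^{2-n}}$, and the resulting $\tau$-integral
\[
\int |\lambda\tau|^{-2^{2-n}}\prod_{j=1}^{n-1}\|F_{j,\tau}\|_{p_j}\,d\tau
\]
is where all the real work happens. One splits by H\"older in $\tau$: one factor uses $\int\|F_{1,\tau}\|_2^2\,d\tau=\|f_1\|_2^4$, one factor carries the $|\tau|^{-2^{2-n}}$ singularity and is handled by Hardy--Littlewood--Sobolev (this is what produces the exponent $\tfrac{2^n}{2^{n-1}-1}$), and the remaining factors are handled by $\sup_\tau\|F_{j,\tau}\|_{p_j}\leq\|f_j\|_{2p_j}^2$ (or, in the base case $n=4$, by Young's convolution inequality, which is where the $\epsilon$-perturbed endpoint exponents originate). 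This $\tau$-integration step --- HLS plus the passage $\|F_{j,\tau}\|_{p_j}\to\|f_j\|_{2p_j}^2$ --- is the mechanism that generates the exponent tuple in \eqref{eq: higher exponents}; it is missing from your plan and cannot be replaced by freezing and interpolation.
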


Bounds for other choices of $\bp$ may be obtained from interpolation.

In section \ref{sec:proof}, we will prove Theorem \ref{theorem: higher levels}. The proof is by induction with the base case $n=4$. The proof of the base case will in fact use Theorem \ref{thm:GX} and in many ways mimic the induction step. For instance, both parts use a $TT^*$ argument to reduce the degree of linearity by 1. However, there are subtle differences in the choices of exponents and the finishing steps, hence we present the two parts of the proof separately.

In Section \ref{sec:scaling}, we will show that the bounds from Theorem \ref{theorem: higher levels} are sharp through a scaling argument.

The authors would like to thank the organizers of the AMS Mathematical Re-search Committee on Recent Developments in Oscillatory Integrals, in particular, Philip Gressman, for their support of this project.

\section{Proof of the Main Result}\label{sec:proof}

\begin{proof}
Base case ($n=4$): By a change of coordinates, suppose that $a_4=(0,1)$. Writing $a_j=(b_j,c_j)$ for $b_j,c_j\in\R$, the general position hypothesis implies $b_j\neq0$ for $j=1,2,3.$ Begin by observing that
\begin{equation}
|\Lambda_4(\bF)|\leq||T(f_1,f_2,f_3)||_2||f_4||_2,
\end{equation}
where
\begin{equation}
T(f_1,f_2,f_3)(y)=\int e^{i\lambda S(x,y)}\prod_{j=1}^3f_j(b_jx+c_jy)\phi(x,y)dx.
\end{equation}
Applying the $TT^*$ method, we have
\begin{multline}
||T(f_1,f_2,f_3)||_2^2=\iiint e^{i\lambda(S(x_1,y)-S(x_2,y))}\prod_{j=1}^3f_j(b_jx_1+c_jy)\overline{f_j}(b_jx_2+c_jy)\\
\times\phi(x_1,y)\phi(x_2,y)dx_1dx_2dy.
\end{multline}

Making the change of variables $x=x_1, y=y, \tau=x_2-x_1$,
\begin{equation}
||T(f_1,f_2,f_3)||_2^2=\int\left[\iint e^{i\lambda S_\tau(x,y)}\prod_{j=1}^3F_{j,\tau}(a_j\cdot(x,y))\phi_\tau(x,y)\right]d\tau,
\end{equation}
where $S_\tau(x,y)=S(x,y)-S(x+\tau,y)$, $F_{j,\tau}(z)=f_j(z)\overline{f}(z+b_j\tau)$, and $\phi_\tau(x,y)=\phi(x,y)\phi(x+\tau,y)$.


Observe that by the Fundamental Theorem of Calculus,
\begin{equation}\label{eq:FTC}
S_\tau(x,y)=S(x,y)-S(x+\tau,y)=-\int_0^\tau\partial_xS(x+t,y)dt,
\end{equation}
hence
\begin{equation}
\left(\prod_{j=1}^3D_j\right)S_\tau(x,y)=-\int_0^\tau D_4S(x+t,y)dt,
\end{equation}
where $D_j=c_j\partial_x-b_j\partial_y$.

By the assumption that $|D_4S(x,y)|\geq1$ for all $(x,y)$ in the convex hull of the support of $\phi$ and the Mean Value Theorem, $\left|\left(\prod_{j=1}^3D_j\right)S_\tau(x,y)\right|\geq\tau$.

Applying Theorem \ref{thm:GX} gives

\begin{equation}
||T(f_1,f_2,f_3)||_2^2\leq C\int |\tau\lambda|^{-1/4}||F_{1,\tau}||_2||F_{2,\tau}||_{2+\delta}||F_{3,\tau}||_{\frac{8+4\delta}{2+3\delta}}d\tau
\end{equation}
for arbitrarily small $\delta>0$. Applying multilinear H\"older's inequality, we obtain
\begin{multline}\label{eq:post Holder}
||T(f_1,f_2,f_3)||_2^2\leq C|\lambda|^{-1/4}\left(\int ||F_{1,\tau}||_2^2d\tau\right)^{1/2}\\
\times\left(\int |\tau|^{\frac{-2-\delta}{4}}||F_{2,\tau}||_{2+\delta}^{2+\delta}d\tau\right)^{1/(2+\delta)}\left(\int ||F_{3,\tau}||_{\frac{8+4\delta}{2+3\delta}}^{\frac{4+2\delta}{\delta}}d\tau\right)^{\frac{\delta}{4+2\delta}}
\end{multline}

We now address each of the above three terms separately. First,
\begin{align*}
\int ||F_{1,\tau}||_2^2d\tau&=\int\int |f_1(z)|^2|f_1(z+b_j\tau)|^2dzd\tau\\
&\leq C\left|\left|f_1\right|\right|_2^4.
\end{align*}

To bound the second term, we use the Hardy-Littlewood-Sobolev inequality, obtaining
\begin{align*}
\int |\tau|^{\frac{-2-\delta}{4}}||F_{2,\tau}||_{2+\delta}^{2+\delta}d\tau&=\int\int |\tau|^{\frac{-2-\delta}{4}}|f_2(z)|^{2+\delta}|f_2(z+b_j\tau)|^{2+\delta}dzd\tau\\
&\leq C|||f_2|^{2+\delta}||_{\frac{8}{6-\delta}}^2\\
&=C||f_2||_{\frac{16+8\delta}{6-\delta}}^{4+2\delta}
\end{align*}

Lastly, for functions $g,h$, define 

\begin{equation}
g*'h(z):=\int g(x)h(x+b_3z)dx.
\end{equation}

We treat the third term through Young's convolution inequality (for $*'$ in place of $*$) and the elementary equality $||f^s||_{s'}=||f||_{ss'}^s$ for exponents $0<s,s'<\infty$. Denote exponents $p=\frac{8+4\delta}{2+3\delta}$ and $q=\frac{4+2\delta}{\delta}$. Then,

\begin{align*}
\left(\int ||F_{3,\tau}||_{\frac{8+4\delta}{2+3\delta}}^{\frac{4+2\delta}{\delta}}d\tau\right)^{\frac{\delta}{4+2\delta}}&=\left|\left|\left(\int f_3(x)^p\overline{f_3}(x+b_3\tau)^p dx\right)^{1/p}\right|\right|_{L^q(\tau)}\\
&=||(f_3^p*'\overline{f_3^p})^{1/p}||_q\\
&=||f_3^p*'\overline{f_3^p}||_{q/p}^{1/p}\\
&\leq||f_3^p||_r^{2/p}=||f_3||_{pr}^2,
\end{align*}
where $r$ is defined by the relation $\frac{2}{r}=1+\frac{p}{q}$. Hence, $\frac{2}{pr}=\frac{1}{p}+\frac{1}{q}$ and $pr=\frac{16+8\delta}{2+5\delta}$.

Returning to \eqref{eq:post Holder} and applying the bounds for the individual integrals gives
\begin{equation}
||T(f_1,f_2,f_3)||_2^2\leq C|\lambda|^{-1/4}||f_1||_2^2||f_2||_{\frac{16+8\delta}{6-\delta}}^2||f_3||_{\frac{16+8\delta}{2+5\delta}}^2,
\end{equation}
resulting in the conclusion
\begin{equation}
|\Lambda_4(\bF)|\leq|\lambda|^{-1/8}||f_1||_2||f_2||_{\frac{16+8\delta}{6-\delta}}||f_3||_{\frac{16+8\delta}{2+5\delta}}||f_4||_2.
\end{equation}

Let $r_1=\frac{16+8\delta}{6-\delta}$ and $r_2=\frac{16+8\delta}{2+5\delta}$. Since $\delta>0$ is a small parameter, $r_1$ is a small amount greater than $8/3$ and $r_2$ is a small amount less than 8 such that $\frac{1}{r_1}+\frac{1}{r_2}=\frac{1}{2}$. Thus, letting $\epsilon=r_1-8/3$, one recovers the exponents of $8/3+\epsilon$ and $8-\frac{18\epsilon}{2+3\epsilon}$ given in the statement of Theorem \ref{theorem: higher levels}.\\

Induction step: Let $\epsilon>0$ and suppose Theorem \ref{theorem: higher levels} holds for a particular $n$. By a change of coordinates, suppose that $a_{n+1}=(0,1)$. Again, write $a_j=(b_j,c_j)$ for $b_j,c_j\in\R$, and the general position hypothesis implies $b_j\neq0$ for $1\leq j\leq n$. As in the proof of the base case, one may show that
\begin{equation}\label{eq:fromsim}
|\Lambda_{n+1}(\bF)|^2\leq||f_{n+1}||_2^2\int\left[\iint e^{i\lambda S_\tau(x,y)}\prod_{j=1}^nF_{j,\tau}(a_j\cdot(x,y))\phi_\tau(x,y)dxdy\right]d\tau,
\end{equation}
where $S_\tau(x,y)=S(x,y)-S(x,y+\tau)$, $F_{j,\tau}(z)=f_j(z)\overline{f}(z+b_j\tau)$, and $\phi_\tau(x,y)=\phi(x,y)\phi(x,y+\tau)$.

Again using \eqref{eq:FTC},
\begin{equation}
\left(\prod_{j=1}^nD_j\right)S_\tau(x,y)=-\int_0^\tau D_{n+1}S(x+t,y)dt.
\end{equation}
Thus, by the assumption that $|D_{n+1}S(x,y)|\geq1$ for all $(x,y)\in \text{supp }\phi$ and the Mean Value Theorem, 
\begin{equation}\label{eq:theabove}
\left|\left(\prod_{j=1}^nD_j\right)S_\tau(x,y)\right|\geq\tau.
\end{equation}

By the induction hypothesis and \eqref{eq:theabove}, the integral on the right hand side of \eqref{eq:fromsim} is bounded above by

\begin{equation}\label{eq:tau int}
\int C|\lambda\tau|^{-2^{1-n}}\prod_{j=1}^n||F_{j,\tau}||_{L^{p_j}} d\tau,
\end{equation}
where 

\begin{equation}
\bp=\left(2,2,\frac{2^{n-1}}{2^{n-2}-1},...,\frac{2^{n-1}}{7},\frac{2^{n-1}}{3}+\frac{\epsilon}{2},2^{n-1}-\frac{9\cdot2^{n-3}(\epsilon/2)}{2^{n-3}+3(\epsilon/2)}\right).
\end{equation}

Applying H\"older's inequality, the integral in \eqref{eq:tau int} is bounded by

\begin{multline}
C|\lambda|^{-2^{1-n}}\left(\int ||F_{1,\tau}||_2^2d\tau\right)^{1/2}\left(\int |\tau|^{-2^{2-n}}||F_{2,\tau}||_2^2d\tau\right)^{1/2}\\
\times\sup_\tau||F_{3,\tau}||_{p_3}\cdots\sup_\tau||F_{n,\tau}||_{p_n}.
\end{multline}

As before, we have

\begin{equation}
\left(\int ||F_{1,\tau}||_2^2d\tau\right)^{1/2}\leq||f_1||_2^2.
\end{equation}

Similar to before, we bound the second term using the Hardy-Littlewood-Sobolev inequality.

\begin{align*}
\int |\tau|^{-2^{2-n}}||F_{2,\tau}||_2^2d\tau&=\int\int |\tau|^{-2^{2-n}}|f_2(z)|^2|f_2(z+b_j\tau)|^2dzd\tau\\
&\leq C||f_2^2||_{\frac{2^{n-1}}{2^{n-1}-1}}^2\\
&=C||f_2||_{\frac{2^n}{2^{n-1}-1}}^4.
\end{align*}

Lastly, for all $3\leq j\leq n$, we see that

\begin{equation}
\sup_\tau||F_{j,\tau}||_{p_j}=\sup_\tau\left|\int f_j(x)^{p_j}\overline{f_j}(x+b_j\tau)^{p_j}dx\right|^{1/p_j}=||f_j||_{2p_j}^2.
\end{equation}

Substituting these bounds into \eqref{eq:fromsim}, we see that

\begin{equation}
|\Lambda_{n+1}(\bF)|^2\leq C|\lambda|^{-2^{1-n}}||f_{n+1}||_2^2||f_1||_2^2||f_2||_{\frac{2^n}{2^{n-1}-1}}^2||f_3||_{2p_3}^2\cdots||f_n||_{2p_n}^2.
\end{equation}
Taking the square root of both sides and checking the exponents match their desired values completes the proof.

\end{proof}

\section{Sharpness by Scaling Argument}\label{sec:scaling}

In this section, we prove that Theorem \ref{theorem: higher levels} is sharp with respect to the power of $\lambda$ in \eqref{eq: higher levels} and for the exponents given by \eqref{eq: higher exponents} and their interpolations.

\begin{proposition}
Let $1\leq p_j\leq \infty$ where $1\leq j\leq n$ and $\sum_j p_j^{-1}=\frac{2^n-n}{2^{n-1}}$. Fix $\phi\in C_0^\infty(\R^2)$ and $a_j\in\R^2$ lying in general position.

Then, for all phases $S(x,y)$ which are real-analytic in a neighborhood of 0, there exists $C>0$ such that the functional $\Lambda_n:L^{p_1}\times\cdots\times L^{p_n}\rightarrow\C$ has operator norm $||\Lambda_n||\geq C|\lambda|^{\frac{-1}{2^{n-1}}}$.
\end{proposition}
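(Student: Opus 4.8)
The plan is to exhibit, for each large $\lambda$, an explicit choice of test functions $f_j$ supported on small sets whose scale is tuned to $\lambda$, so that the oscillatory factor $e^{i\lambda S(x,y)}$ is essentially constant on the region of integration and hence contributes no cancellation, while the normalization $\prod_j \|f_j\|_{p_j}$ is as small as possible relative to the size of $\Lambda_n(\bF)$. Concretely, after a linear change of variables we may assume two of the covectors are $(1,0)$ and $(0,1)$; pick a small parameter $\delta>0$ (to be taken comparable to a power of $\lambda^{-1}$) and let each $f_j$ be the indicator of an interval of length $\delta$ on the line $\{a_j\cdot(x,y)=\text{const}\}$, with the constants chosen so that the intersection of all the slabs $\{|a_j\cdot(x,y)-t_j|\le\delta\}$ is a nonempty set contained in the support of $\phi$. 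Because the $a_j$ are in general position, this common intersection is a convex polygon of diameter $\sim\delta$ and area $\sim\delta^2$; this uses only the two-dimensionality of the ambient space and that any two of the $a_j$ are linearly independent.

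Next I would estimate the two sides. Since $S$ is real-analytic, $\nabla S$ is bounded on a neighborhood of $0$, so on a set of diameter $\sim\delta$ the phase $\lambda S(x,y)$ varies by at most $O(\lambda\delta)$; choosing $\delta = c\lambda^{-1}$ for a small constant $c$ makes $e^{i\lambda S}$ lie within a fixed small arc, so $\mathrm{Re}\,e^{i\lambda(S(x,y)-S(x_0,y_0))}\ge 1/2$ on the region, giving
\begin{equation}
|\Lambda_n(\bF)| \;\gtrsim\; \bigl|\{(x,y): |a_j\cdot(x,y)-t_j|\le \delta \ \forall j\}\bigr| \;\gtrsim\; \delta^2.
\end{equation}
On the other hand, $f_j$ is the indicator of an interval of length $\sim\delta$, so $\|f_j\|_{p_j} = (\text{length})^{1/p_j} \sim \delta^{1/p_j}$, hence
\begin{equation}
\prod_{j=1}^n \|f_j\|_{p_j} \;\sim\; \delta^{\sum_j 1/p_j} \;=\; \delta^{\frac{2^n-n}{2^{n-1}}}.
\end{equation}
Therefore the operator norm is at least
\begin{equation}
\|\Lambda_n\| \;\gtrsim\; \frac{\delta^2}{\delta^{(2^n-n)/2^{n-1}}} \;=\; \delta^{\,2-(2^n-n)/2^{n-1}} \;=\; \delta^{\,n/2^{n-1}} \;\sim\; \lambda^{-n/2^{n-1}}.
\end{equation}

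This is not yet the claimed bound $|\lambda|^{-1/2^{n-1}}$, and the fix — which I expect to be the main subtlety rather than the main obstacle — is to make the test functions anisotropic: rather than shrinking every slab to width $\delta$, keep the slabs for $f_1$ and $f_2$ at width $\sim 1$ (or, more precisely, allow $n-1$ of the functions to live at the unit scale along their own coordinate and only shrink in the transverse direction) and only localize in the one remaining direction at scale $\delta\sim\lambda^{-1}$, exploiting that the phase need only be controlled along the direction in which we are actually testing cancellation. Tracking the resulting intersection geometry — a thin $\delta\times 1$ parallelogram rather than a $\delta\times\delta$ square — replaces the factor $\delta^2$ by $\delta$ and the product of norms by $\delta^{1/p_j}$ in only the shrinking directions, and optimizing over which functions to localize yields exactly the exponent $-1/2^{n-1}$; the bookkeeping of which $p_j$ gets which power, and checking that the constraint $\sum_j p_j^{-1} = (2^n-n)/2^{n-1}$ makes the powers of $\delta$ on numerator and denominator differ by precisely $1/2^{n-1}$, is the step requiring care. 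Finally, one notes the argument is uniform in $S$ (only an upper bound on $|\nabla S|$ near $0$ is used) and in the choice of general-position covectors, so the constant $C$ may be taken independent of $\lambda$, completing the proof.
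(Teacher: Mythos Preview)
Your first attempt correctly identifies that isotropic scaling at $\delta=c\lambda^{-1}$ yields only $\|\Lambda_n\|\gtrsim\lambda^{-n/2^{n-1}}$, which is too weak. The anisotropic fix you propose, however, does not work: on a $1\times\delta$ parallelogram the phase $\lambda S(x,y)$ still varies by order $\lambda$ in the long direction (nothing in the hypotheses forces $S$ to be constant along any line), so $e^{i\lambda S}$ oscillates wildly there and you cannot bound $|\Lambda_n(\bF)|$ from below by the area of the region. The assertion that ``the phase need only be controlled along the direction in which we are actually testing cancellation'' is false for a two-dimensional integral; the whole integrand must be essentially nonoscillatory on the support. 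Moreover, your bookkeeping for the norms would require $\sum_{j\in J}p_j^{-1}$ to hit a specific value for the subset $J$ of shrunk indices, which is not guaranteed by the single constraint $\sum_j p_j^{-1}=(2^n-n)/2^{n-1}$.

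The idea you are missing is that every polynomial in $(x,y)$ of degree at most $n-1$ is annihilated by $D_n$ and hence degenerate, i.e.\ expressible as $\sum_{j=1}^n g_j(a_j\cdot(x,y))$. Its contribution to $e^{i\lambda S}$ can therefore be absorbed into the $f_j$ by unimodular modulation, which leaves each $\|f_j\|_{p_j}$ unchanged. Thus one may assume $S$ vanishes to order $n$ at the origin, and then isotropic scaling succeeds at the much larger scale $\delta=\lambda^{-1/n}$: on a $\delta$-box one has $|\lambda S|\lesssim\lambda\delta^n=O(1)$, and rerunning your own computation with this $\delta$ gives
\[
\|\Lambda_n\|\;\gtrsim\;\frac{\delta^2}{\delta^{(2^n-n)/2^{n-1}}}\;=\;\delta^{\,n/2^{n-1}}\;=\;\lambda^{-1/2^{n-1}},
\]
which is exactly the claimed bound. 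This is the paper's argument.
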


\begin{proof}
Any polynomial in $x$ and $y$ of degree less than or equal to $n-1$ vanishes under $D_n$ and is degenerate; hence its contribution to $e^{i\lambda S(x,y)}$ may be absorbed by the $f_j$. Thus, we may take $S(x,y)=S_n(x,y)+\tilde{S}(x,y)$, where $S_n$ is a homogeneous polynomial of degree $n$ and $\tilde{S}(x,y)$ has Taylor expansion at the origin consisting of terms of only degree $n+1$ or higher.

For $\lambda\geq1$, pick $f_j=\one_{[0,\lambda^{-1/n}]}$. Then,

\begin{equation}
\Lambda_n(\bF)=\int_{\R^2}e^{i\lambda S(x,y)}\prod_{j=1}^n \one_{[0,\lambda^{-1/n}]}(a_j\cdot (x,y))\phi(x,y)dxdy.
\end{equation}

By the homogeneity of $S_n(x,y)$, the properties of $\tilde{S}(x,y)$, and the change of variables $x=\lambda^{-1/n}x', y=\lambda^{-1/n}y'$, we have
\begin{multline}
\Lambda_n(\bF)=\lambda^{-2/n}\int_{\R^2}e^{iS_n(x',y')+O(\lambda^{-1})}\prod_{j=1}^n \one_{[0,1]}(a_j\cdot (x',y'))\\
\times\phi(\lambda^{-1/n}x',\lambda^{-1/n}y')dx'dy'.
\end{multline}

The only instances of dependence on $\lambda$ in the above integral appear in the $O(\lambda^{-1})$ term, which goes to 0 as $\lambda\rightarrow\infty$, and in the term $\phi(\lambda^{-1/n}x',\lambda^{-1/n}y')$, which converges to $\phi(0)$ uniformly on the support of $\prod_{j=1}^n \one_{[0,1]}(a_j\cdot (x',y'))$. Thus, $\Lambda_n(\bF)\geq C|\lambda|^{-2/n}$ for some $C>0$.

For each $j$, we see that $||f_j||_{p_j}=\lambda^{\frac{-1}{np_j}}$, hence
\begin{equation}
\prod_j||f_j||_{p_j}=\lambda^{-\left(\frac{2^n-n}{n2^{n-1}}\right)}.
\end{equation}

Therefore,
\begin{equation}
||\Lambda_n||\geq\frac{|\Lambda_n(\bF)|}{\prod_j||f_j||_{p_j}}\geq \frac{C\lambda^{-2/n}}{\lambda^{-\left(\frac{2^n-n}{n2^{n-1}}\right)}}=\lambda^{\frac{2^n-n-2^n}{n2^{n-1}}}=\lambda^{\frac{-1}{2^{n-1}}}.
\end{equation}

\end{proof}

\end{document}